\theoremstyle{plain}
\newtheorem{theorem}                {Theorem}      [section]
\newtheorem{lemma}        [theorem]  {Lemma}
\theoremstyle{definition}
\newtheorem{remark}       [theorem]  {Remark}
\newtheorem{definition}   [theorem]  {Definition}
\newtheorem{property}      [theorem]  {The Almansi property}
\newtheorem{property-general-semi}      [theorem]  {The generalized Almansi property}
\newtheorem{property-general}      [theorem]  {The weak Almansi property}
\DeclareMathOperator{\Div}{div}
\numberwithin{equation}{section}
\def \R{{\mathbb R}}
\def \s{{\mathbb S}}
\def \z{{\mathbb Z}}
\def \h{{\mathbb H}}
\def \1{~}
\numberwithin{equation}{section}
\begin{document}

\title[Almansi property]{A note on the Almansi property}

\author{S.~Montaldo}
\address{Universit\`a degli Studi di Cagliari\\
Dipartimento di Matematica e Informatica\\
Via Ospedale 72\\
09124 Cagliari, Italia}
\email{montaldo@unica.it}

\author{A.~Ratto}
\address{Universit\`a degli Studi di Cagliari\\
Dipartimento di Matematica e Informatica\\
Via Ospedale 72\\
09124 Cagliari, Italia}
\email{rattoa@unica.it}

\begin{abstract}
The first goal of this note is to study the Almansi property on an $m$-dimensional model in the sense of Greene and Wu and, more generally, in a Riemannian geometric setting. In particular, we shall prove that the only model on which the Almansi property is verified is the Euclidean space $\R^m$. In the second part of the paper we shall study Almansi's property and biharmonicity for functions which depend on the distance from a given submanifold.
Finally, in the last section we provide an extension to the semi-Euclidean case $\R^{p,q}$ which includes the proof of the classical Almansi property in $\R^m$ as a special instance.

\end{abstract}

\thanks{Corresponding author e-mail: montaldo@unica.it\\
Work supported by P.R.I.N. 2015 -- Variet\`a reali e complesse: geometria, topologia e analisi armonica -- Italy, and G.N.S.A.G.A., INdAM, Italy.}

\subjclass{58E20}

\keywords{Biharmonic and polyharmonic functions, Almansi's property, semi-Euclidean geometry}

\maketitle
\begin{flushright}
\textit{Dedicated to Prof. Renzo Caddeo}
\end{flushright}
\section{Introduction}\label{intro}
Let $(M^m,g)$ be a smooth, $m$-dimensional Riemannian manifold with metric $g$. If $F$ denotes a smooth, real valued function on $M^m$, the well-known {\it Laplace-Beltrami} operator $\Delta$ can be described with respect to local coordinates by
\begin{equation}\label{Laplacian}
\Delta F=\Div (\nabla F)=\frac{1}{\sqrt{|g|}}\, \frac{\partial}{\partial x_j}\left(g^{ij}\, \sqrt{|g|}\, \frac{\partial F}{\partial x_i}\right).
\end{equation}
\begin{definition}\label{iterated-laplacian}
For a positive integer $s$ the iterated Laplace-Beltrami operator $\Delta^s$ is defined by
$$
\Delta^{0} F=F,\quad \Delta^{s} F=\Delta(\Delta^{(s-1)}F).
$$
\end{definition}
\begin{definition}\label{definition-proper-r-harmonic} For a positive integer $s$ we say that a smooth, real-valued function $F:(M^m,g)\to\R$ is
\begin{enumerate}
\item[(a)] {\it $s$-harmonic} if $\Delta^s F=0$,
\item[(b)] {\it proper $s$-harmonic} if $\Delta^s F=0$ and $\Delta^{(s-1)} F$ does not vanish identically.
\end{enumerate}
\end{definition}

In particular, it should be noted that {\it harmonic} functions are precisely the $1$-harmonic ones, while {\it biharmonic} functions coincide with the $2$-harmonic.  In some texts, $s$-harmonic functions are also called {\it polyharmonic} of order $s$ (see \cite{ACL} for background and related analytical topics).

The classical Almansi property can be stated as follows (see \cite{Almansi, Caddeo2}).

\begin{property}\label{Almansi} Let $\R^m$ be equipped with its canonical Euclidean metric. Let $H:\R^m \to \R$ be defined by
\begin{equation}\label{definizione-H}
H(x_1,\ldots,x_m)=c_1\,\sum_{j=1}^m \, x_j^2 +c_2\,\, ,
\end{equation}
where $c_1,\,c_2$ are two real constants with $c_1 \neq 0$. If $F:U\subset\R^m \to \R$ is an $s$-harmonic function ($s \geq1$) on an open set $U$, then the product function $G=H \, F$ is $(s+1)$-harmonic on $U$. Moreover, if $F$ is a proper $s$-harmonic on the whole $\R^m$, then $G$ is proper $(s+1)$-harmonic on $\R^m$.
\end{property}

As a consequence of the Almansi property, any biharmonic function can be represented in terms of two harmonic functions and, more generally, any $r$-harmonic function can be described by means of $r$ harmonic functions (see Proposition~1.3 of \cite{ACL}). In the same spirit, in Chapter~10 of  \cite{dassios}, ellipsoidal biharmonic functions on $\R^3$ were described by means of their relation to ellipsoidal harmonics via the Almansi property. Moreover, as shown in \cite{dassios2}, the Almansi decomposition is related to the Kelvin transformation and provides a way to solve analytical problems in potential theory and Stokes flow which could be difficult to solve by the classical spectral method. 

In {\cite{GMR}} we started a program of investigation of $r$-harmonic functions in a Riemannian geometric setting and, in particular, we provided several new families of examples on open subsets of the classical compact simple Lie groups. The main aim of this paper is to investigate up to which extent Almansi-type methods can be used to produce new examples on Riemannian manifolds which are different from the Euclidean space.
More specifically, in a Riemannian setting the function $H$ in \eqref{definizione-H} can be conveniently rewritten as
\begin{equation}\label{definizione-H-bis}
H=H(r)=c_1\,r^2 +c_2\,\, ,
\end{equation}
where $r$ denotes the distance function from a fixed point. Now, we observe that, since the constant function $F \equiv 1$ is trivially proper harmonic on any Riemannian manifold, the validity of the Almansi property in a Riemannian context requires that the function $H$ be proper biharmonic. These facts have led Caddeo (see \cite{Caddeo1, Caddeo2}) to investigate the biharmonicity of the function $H(r)=r^k$ ($k \in \z$) on a general Riemannian manifold. This study was based on the local description of $\Delta$ with respect to a set of normal coordinates as described in \cite{VanHecke}: these coordinates offer great generality, but also imply high technical difficulties. These considerations have suggested to us to restrict our attention to a specific class of manifolds (which include all the space forms) on which we can carry out a study of the biharmonicity of any function of the type $H=H(r)$. More precisely, the first aim of this note is to study a version of the Almansi property for biharmonic functions on \emph{models} in the sense of \cite{GW} (see also \cite{Petersen}). In the recent past, this type of manifolds have provided a suitable setting to construct several new examples of biharmonic maps (see \cite{MOR, MR}, and \cite{MO} for a survey on this topic). We recall that an $m$-dimensional manifold $(M^m(o),\,g)$ with a pole $o$ is a model if and only if every linear isometry of $T_oM$ can be realized as the differential at $o$ of an isometry of $M$. A significant geometric property of a model is the fact that we can describe it by means of geodesic polar coordinates centered at the pole $o$, as follows:
\begin{equation}\label{model}
(M^m(o),\,g) = \left (\,\s^{m-1} \times [0,\, + \infty) , \, f^2(r)\, g_{\s^{m-1}} \, + \, dr^2 \,\right ) \,\, ,
\end{equation}
where
$$ (\,\s^{m-1},\, g_{\s^{m-1}} \, )
$$
denotes the $(m-1)$-dimensional Euclidean unit sphere, and the function $f(r)$ is a smooth function which satisfies
\begin{equation}\label{condizioni-su-f}
\left \{
  \begin{array}{l}
    f(0)=0 \,, \quad f'(0)=1 \quad {\rm and}\quad f(r)>0 \quad {\rm if} \,\, r>0 \,\, ; \\
    \,\\
    f^{(2k)}(0)=0 \quad {\rm for} \,\, {\rm all } \,\, k \geq 1 \,\, .\\
  \end{array}
\right .
\end{equation}
We also note that $r$ measures the geodesic distance from the pole $o$. To shorten notation and emphasize the role of the function $f$, we shall write $M_f^m(o)$ to denote a model as in \eqref{model}.

\begin{remark}\label{remark-su-R^m} We observe that, if $f(r)=r$, then $M_f^m(o)=\R^m$. In particular, the function $H$ given in \eqref{definizione-H-bis} is a special instance of a function on $M_f^m(o)$ which just depends on the distance from the pole. For future reference it is useful to remark that the function $r^2$ is smooth across the pole of $M_f^m(o)$, while $H(r)=r$ is not. More precisely, we point out that a $\mathcal{C}^\infty$ radial function $H=H(r)$ ($r \geq0$) gives rise to a function on $M_f^m(o)$ which is smooth across the pole if and only if (see \cite{Petersen}) its derivatives at $r=0$ satisfy
\begin{equation}\label{smoothness-across-pole}
H^{(2k-1)}(0)=0 \quad {\rm for} \,\, {\rm all } \,\, k \geq 1 \,\, .
\end{equation}
\end{remark}
\begin{remark}\label{remark-su-altri-modelli}
If $f(r)= \sinh r$, then $M_f^m(o)$ is isometric to the $m$-dimensional hyperbolic space $\h^m$. With a slight abuse of terminology, we shall also admit the case that $f(r)$ is defined on a finite interval $[0,\,b]$, with $f(b)=0$, $f'(b)=-1$ and  $f^{(2k)}(b)=0$  {for} all  $k \geq 1$. In particular, if $f(r)=\sin r $ and $\, 0 \leq r \leq \pi $, our manifold becomes the Euclidean unit $m$-sphere $\s^m$.
\end{remark}

For the sake of completeness, we also recall that the radial curvature $K(r)$ ($r>0$) of a model $M_f^m(o)$ is defined as the sectional curvature of any plane which contains $\partial \slash\, \partial  r$. The radial curvature is related to the function $f(r)$ by means of the following fundamental equation (the Jacobi equation, see \cite{GW}):
\begin{equation}\label{equazionecurvaturaradiale}
    f''(r)\,+\,K(r)\,f(r)\,=\,0 \,\, , \quad r>0 \,\, .
\end{equation}
Because of the previous discussion and Remark \ref{remark-su-R^m}, a first natural step is to study the following weak version of the Almansi property \ref{Almansi}:
\begin{property-general}\label{Almansi-model} Let $M_f^m(o)$ be an $m$-dimensional model as in \eqref{model}. We say that the weak Almansi property holds on $M_f^m(o)$ if there exists a smooth, radial function $H=H(r)$ on $M_f^m(o)$ such that: if $F:U \subset M_f^m(o) \to \R$ is a locally defined harmonic function, then the product function $G=H \, F$ is biharmonic on the open set $U$; and if $F$ is proper harmonic on the whole $M_f^m(o)$, then $G=H \, F$ is proper biharmonic on $M_f^m(o)$.
\end{property-general}
We shall obtain the following result:
\begin{theorem}\label{main-theorem} Let $M_f^m(o)$ be an $m$-dimensional model on which the weak Almansi property \ref{Almansi-model} holds. Then $M_f^m(o)=\R^m$ and the function $H$ coincides with \eqref{definizione-H}.
\end{theorem}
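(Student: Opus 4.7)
My plan is to use separation of variables in the polar-coordinate form~\eqref{model} and extract constraints from the Almansi hypothesis applied to a large enough family of local harmonics. For each $k\geq 0$ and each spherical harmonic $Y_k$ of degree $k$ on $\s^{m-1}$ (so $\Delta_{\s^{m-1}}Y_k=-\lambda_k Y_k$ with $\lambda_k=k(k+m-2)$), a function of the form $F(r,\theta)=\phi(r)Y_k(\theta)$ is harmonic on an open $U\subset M_f^m(o)$ if and only if $\phi$ solves the radial ODE
\[
L_k\phi\;:=\;\phi''+(m-1)\frac{f'}{f}\phi'-\frac{\lambda_k}{f^2}\phi\;=\;0.
\]
The product rule $L_k(uv)=v\,\Delta u+2u'v'+uL_k v$ for radial arguments (with $\Delta u=u''+(m-1)(f'/f)u'$ the radial Laplacian) gives
\[
\Delta(HF)=\bigl[(\Delta H)\phi+2H'\phi'\bigr]Y_k.
\]
Iterating and substituting the expression for $L_k\phi'$ obtained by differentiating $L_k\phi=0$, the identity $(f'/f)'=f''/f-(f'/f)^2$ produces a cancellation that leaves
\[
\Delta^2(HF)=\Bigl\{\Bigl[\Delta^2H+\frac{4\lambda_k}{f^2}\Bigl(H''-\frac{f'}{f}H'\Bigr)\Bigr]\phi+4H'''\phi'\Bigr\}Y_k.
\]
Deriving this closed-form identity cleanly is the step I expect to be the main technical obstacle.

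Once the formula is available, the rest is algebra. Fix $k$ and choose $U$ disjoint from the pole; the ODE $L_k\phi=0$ admits two linearly independent solutions on $U$, with nonzero Wronskian, so applying the Almansi hypothesis to the two corresponding local harmonics forces both bracketed coefficients above to vanish identically. This yields
\[
H'''\equiv 0\qquad\text{and}\qquad\Delta^2H+\frac{4\lambda_k}{f^2}\Bigl(H''-\frac{f'}{f}H'\Bigr)=0\quad\text{for every }k\geq 0.
\]
From $H'''\equiv 0$ together with the smoothness condition~\eqref{smoothness-across-pole} at the pole (which forces $H'(0)=0$) we obtain $H(r)=c_1r^2+c_2$.

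For the second equation, the case $k=0$ recovers $\Delta^2H=0$, which is independently forced by the Almansi requirement for $F\equiv 1$ (so that $H$ itself be proper biharmonic). For any $k\geq 1$ one has $\lambda_k>0$ and, using $\Delta^2H=0$, the equation reduces to $H''-(f'/f)H'=0$, i.e., $2c_1=2c_1r(f'/f)$. Proper biharmonicity of $H$ forces $c_1\neq 0$, so $rf'=f$, equivalently $(f/r)'=0$; combined with the normalization $f'(0)=1$, this yields $f(r)\equiv r$, whence $M_f^m(o)=\R^m$ and $H$ coincides with~\eqref{definizione-H}.
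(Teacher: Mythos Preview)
Your argument is correct. The closed formula
\[
\Delta^2(HF)=\Bigl\{\Bigl[\Delta^2H+\tfrac{4\lambda_k}{f^2}\bigl(H''-\tfrac{f'}{f}H'\bigr)\Bigr]\phi+4H'''\phi'\Bigr\}Y_k
\]
checks out (the $2(\Delta H)'\phi'$ term coming from $L_k\bigl((\Delta H)\phi\bigr)$ cancels against the corresponding term in $2L_k(H'\phi')$, leaving exactly $4H'''\phi'$), and the Wronskian argument then cleanly forces both coefficients to vanish for every $k$.

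However, your route is genuinely different from the paper's. The paper never leaves the radial world ($k=0$): it first uses $F\equiv1$ to see that $H$ is proper biharmonic, invokes smoothness of $\Delta H$ across the pole to conclude $\Delta H\equiv d_H\neq0$, and then plugs a \emph{nonconstant} radial harmonic $F$ (so $f^{m-1}F'=c_F\neq0$, necessarily defined only away from the pole) into the expression for $(\Delta G)'$ to deduce that $(f'/f)H'$ is constant. From there $H''$ is constant and $f'/f=C^*/r$ follows. By contrast, you obtain $H'''\equiv0$ directly from the $\phi'$-coefficient and then use $k\ge1$ (with $\lambda_k>0$) to get $H''=(f'/f)H'$, bypassing the smoothness-of-$\Delta H$ step entirely.

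Each approach has its payoff. The paper's argument is more economical in its test functions and in fact proves the sharper statement that the weak Almansi property \emph{restricted to radial harmonics} already forces $M_f^m(o)=\R^m$; it also avoids computing the full bi-Laplacian and any spherical-harmonic machinery. Your approach yields a transparent universal identity for $\Delta^2(HF)$ that makes the two obstructions ($H'''$ and $H''-(f'/f)H'$) visible at a glance and dispatches them by a clean linear-independence argument; it would also adapt readily to higher iterates $\Delta^{s+1}(HF)$ if one wanted to pursue the $s$-harmonic version.
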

In the direction of positive answers, in Section\1\ref{section-distanza-da-varieta} we shall introduce a suitable class of warped products and prove a version of the Almansi property for functions which depend on the distance from a given submanifold (see Theorem\1\ref{theorem-2-warped} below). Also, in Section\1\ref{almansi-semi} we provide an extension of the classical Almansi property to the semi-Euclidean case.

\noindent{\bf Acknowledgements}. The authors wish to thank the referee for several useful comments and suggestions which improved the quality of the paper.

\section{Proof of Theorem \ref{main-theorem}}

First, we observe that, as an easy consequence of the definitions \eqref{Laplacian} and \eqref{model}, the Laplacian of a radial function $F=F(r)$ on a model $M_f^m(o)$ is given by:
\begin{equation}\label{Laplacian-radiale}
    \Delta F(r)= \frac {1}{f^{m-1}(r)}\, \left( f^{m-1}(r)\,F'(r) \right)'=F''(r) + (m-1)\, \frac{f'(r)}{f(r)}\, F'(r) \,\,,
\end{equation}
where $'$ denotes derivative with respect to $r$. We also recall here the general formula for the Laplacian of a product of two functions:
\begin{equation}\label{Laplaciano-prodotto}
\Delta (F_1 \,F_2)= F_1 \, \Delta F_2+ F_2 \, \Delta F_1 +2 \,\nabla F_1 \cdot \nabla F_2.
\end{equation}
In particular, we point out that, if $F_1,\,F_2$ are two radial functions on a model, the product formula \eqref{Laplaciano-prodotto} takes the simple form:
\begin{equation}\label{Laplaciano-prodotto-radiale}
\Delta (F_1 \,F_2)= F_1 \, \Delta F_2+ F_2 \, \Delta F_1 +2 \,F_1'\,F_2'.
\end{equation}
Now we provide the proof of Theorem \ref{main-theorem}.
\begin{proof} First, we note that according to \eqref{Laplacian-radiale} a radial function on a model $M_f^m(o)$ is harmonic if and only if
\begin{equation}\label{condizione-harmonicity}
f^{m-1}(r) \, F'(r)= c_F ,
\end{equation}
where $c_F$ is a real constant. To simplify the notation we shall write, for any radial function $F$,
\begin{equation}\label{tau-radiale}
    \tau_F (r)= F''(r) + (m-1)\, \frac{f'(r)}{f(r)}\, F'(r) \,\,.
\end{equation}
Now, let us assume that a pair $M_f^m(o)$, $H(r)$ verify the weak Almansi property \ref{Almansi-model}. Let $F$ be a locally defined proper radial harmonic function, so that \eqref{condizione-harmonicity} holds (note that the set of such functions is always not empty, because $F\equiv 1$ is proper harmonic). Now, we set $G(r)=F(r) \, H(r)$ and, by using \eqref{Laplaciano-prodotto-radiale}, \eqref{tau-radiale} and the harmonicity of $F$, we deduce:
\begin{equation}\label{step1}
\left \{
\begin{array}{ll}
{\rm (i)} &\tau_G (r)=\tau_H (r)\,F (r)+2\, H'(r)\,F'(r) \\
& \\
{\rm (ii)}& \tau_G ' (r)=\tau_H '(r)\,F (r)+\,F' (r)\,\displaystyle{\left [\tau_H (r)+2\,H''(r)-2\,(m-1)\, H'(r) \frac{f'(r)}{f(r)}\right ]}
\end{array}
\right .
\end{equation}
As we explained in the introduction, we know that $H$ must be proper biharmonic. Arguing as in the case of \eqref{condizione-harmonicity}, we deduce that
\begin{equation}\label{step2}
\left \{
\begin{array}{ll}
{\rm (i)} &\tau_H (r) \not \equiv 0\\
& \\
{\rm (ii)} &f^{m-1}(r)\,\tau_H ' (r)=c_H\,\,,
\end{array}
\right .
\end{equation}
where $c_H$ is a real constant. Now, since the function $f$ verifies the conditions in \eqref{condizioni-su-f} and $H$ is smooth across the pole, the only acceptable instance in \eqref{step2} is $c_H=0$, from which it follows easily that
\begin{equation}\label{condizione-eplicita-H}
\tau_H (r)=d_H  \,\, ,
\end{equation}
where $d_H$ is a nonzero constant.
Next, we analyse the consequences of \eqref{condizione-eplicita-H} on \eqref{step1}. Indeed, let us first note that
\begin{equation}\label{condizione-biharmonicity-G}
\tau_G ' (r)=\frac{c_G}{f^{m-1}(r)}
\end{equation}
for some real constant $c_G$ because $G$ is biharmonic. Then, by using \eqref{condizione-eplicita-H}, \eqref{condizione-biharmonicity-G} and \eqref{condizione-harmonicity} into \eqref{step1}(ii) we obtain:
\begin{equation}\label{step4}
\frac{c_G}{f^{m-1}(r)}=\frac{c_F}{f^{m-1}(r)} \left[d_H+2 \,\left(d_H- \,(m-1)\, H'(r) \frac{f'(r)}{f(r)}\right ) -2\,(m-1)\, H'(r) \frac{f'(r)}{f(r)}\right ] \,\,.
\end{equation}
Next, we easily deduce from \eqref{step4} that
$$
H'(r) \frac{f'(r)}{f(r)}=C
$$
for some $C\neq0$, so that it follows immediately from \eqref{condizione-eplicita-H} that also $H''(r)$ must be a constant. Now \eqref{smoothness-across-pole} immediately implies that $H(r)$ is of the type \eqref{definizione-H-bis}. Finally, integrating
$$
\frac{f'(r)}{f(r)}=\frac{C^*}{r}
$$
($C^* \neq 0$) with boundary conditions $f(0)=0,\,f'(0)=1$ we conclude that the only acceptable case is $C^*=1$ and $f(r)=r$, so that $M_f^m(o)=\R^m$ and the proof of Theorem~\ref{main-theorem} is completed.
\end{proof}

\section{Extensions to the case that $r$ measures the distance from a $k$-dimensional submanifold ($k \geq1$)}\label{section-distanza-da-varieta}
As we mentioned in the introduction, the biharmonicity of the distance function $r$ was studied by Caddeo (\cite{Caddeo1}). In particular, he restricted his attention to Euclidean spaces and rank $1$ symmetric spaces and found that, away from the locus $r=0$, the equation $\Delta^2 r=0$ holds on the real line and on the $3$-dimensional space forms. The main aim of this section is to study the biharmonicity of $r$ in a context where $r$ represents the distance not from a fixed point, but from a given submanifold. In particular, in Lemma\1\ref{theorem-1-warped} below we shall obtain an affirmative answer under a specific dimensional restriction: in this favorable case, we shall prove in Theorem\1\ref{theorem-2-warped} a version of the Almansi property for functions which depend on $r$. Let us start the description of our geometric setting by observing that the Riemannian manifold
\begin{equation}\label{join}
(M,g)=\left (\s^{p-1} \times \left [0, \frac{\pi}{2}\right ) \times \s^{q-1}, \sin^2 r \,g_{_{\s^{p-1}}}+dr^2+\cos^2 r \,g_{_{\s^{q-1}}} \right )
\end{equation}
is isometric to $\s^{p+q-1} \setminus \s^{p-1}$, i.e., the Euclidean $(p+q-1)$-sphere minus a focal variety $\s^{p-1}$. We recall that focal varieties are the singular level sets of isoparametric functions (for a survey on isoparametric functions and their associated families of parallel hypersurfaces  we refer to \cite{thorbergsson} or, for a more recent reference, to Chapter 3 of \cite{cecil}).
The isoparametric function $f$ associated to our example \eqref{join}, considering $\s^{p+q-1}\subset \R^p\times \R^q$, is defined by $f=F_{|\s^{p+q-1}}$ where $F(x,y)=|y|^2-|x|^2$, $x\in\R^p$, $y\in\R^q$ and with respect to the coordinates in  \eqref{join} the expression of $f$ is simply given by $\cos(2 r)$. In this example, $r$ measures the distance from the focal variety $\s^{q-1}$ which is the locus associated to $r=0$.

In a similar fashion, the Riemannian manifold
\begin{equation}\label{join-hyperbolic}
(M,g)=\left (\s^{p-1} \times [0, +\infty) \times \s^{q-1}, \sinh^2 r \,g_{_{\s^{p-1}}}+dr^2+\cosh^2 r \,g_{_{\s^{q-1}}} \right )
\end{equation}
is isometric to a warped product of the type $\h^p \times \s^{q-1}$ and again $r$ measures the distance from the focal variety $\s^{q-1}$ (note that the Riemannian manifold \eqref{join-hyperbolic} does not have constant sectional curvature). In order to provide a unified treatment for calculations which involve both the cases \eqref{join} and \eqref{join-hyperbolic}, we consider now the following more general class of warped products:
\begin{equation}\label{general-warped-product}
\left ( M_{f_1}^{p}(o)\times \s^{q-1}, f_1^2 (r) \,g_{_{\s^{p-1}}}+dr^2+ f_2^2 (r) \,g_{_{\s^{q-1}}} \right ) \,\, ,
\end{equation}
where $M_{f_1}^{p}(o)$ is a $p$-dimensional model as illustrated in Section~\ref{intro}, and $f_2(r)$ is a smooth positive function which verify \eqref{smoothness-across-pole} (this assumption guarantees that the Riemannian metric in \eqref{general-warped-product} is smooth across the focal variety $r=0$). To simplify notation, we shall write $M_{f_1,f_2}^{p+q-1}$ to denote a warped product as in \eqref{general-warped-product}. In particular, we observe that if $f_1(r)=\sin r$ and $f_2(r)=\cos r$, then we recover \eqref{join}, while if $f_1(r)=\sinh r$ and $f_2(r)=\cosh r$ we obtain \eqref{join-hyperbolic}. Also, we note that if $f_1(r)= r$ and $f_2(r)\equiv 1$, then $M_{f_1,f_2}^{p+q-1}=\R^p \times \s^{q-1}$.

Now, let us suppose that $F=F(r)$ is a function on $M_{f_1,f_2}^{p+q-1}$ which depends just on $r$. It follows easily from \eqref{Laplacian} that
\begin{equation}\label{Laplacian-radiale-warped}
    \Delta F(r)= \frac {1}{f_1^{p-1}(r)f_2^{q-1}(r)} \left( f_1^{p-1}(r)f_2^{q-1}(r)\,F'(r) \right)'=F''(r) + \left [(p-1) \frac{f_1'(r)}{f_1(r)} +(q-1) \frac{f_2'(r)}{f_2(r)} \right ] F'(r) \,.
\end{equation}
Next, to end the preliminaries of this section, let $F^*=F^*(r,\vartheta_{p-1},\varphi_{q-1})$ be a function on $M_{f_1,f_2}^{p+q-1}$ of the following type:
\begin{equation}\label{funzione-non-radiale}
F^*(r,\vartheta_{p-1},\varphi_{q-1})=F(r)\,V\left(\vartheta_{p-1}\right)\,W\left(\varphi_{q-1}\right ) \,\,,
\end{equation}
where $V(\vartheta_{p-1})$ (respectively, $W(\varphi_{q-1})$) is an eigenfunction of $\Delta^{\s^{p-1}}$ with eigenvalue $\lambda$ (respectively, $\Delta^{\s^{q-1}}$ with eigenvalue $\mu$) (note that in our paper the sign convention for the Laplace operator is such that its spectrum (see \cite{BGM}) has negative eigenvalues). It follows again from \eqref{Laplacian} that
\begin{equation}\label{Laplacian-non-radiale-warped}
    \Delta F^*=\left\lbrace F''(r) + \left [(p-1) \frac{f_1'(r)}{f_1(r)} +(q-1) \frac{f_2'(r)}{f_2(r)} \right ] F'(r)+ \left [\frac{\lambda}{f_1^2(r)}+ \frac{\mu}{f_2^2(r)}\right ] \,\right \rbrace \, V\left(\vartheta_{p-1}\right)\,W\left(\varphi_{q-1}\right ) \,\,.
\end{equation}
Caddeo showed in \cite{Caddeo1} that, away from the locus $r=0$, the distance function $r$ from a fixed point gives rise to a version of the Almansi property for radial functions. Namely, it was proved in \cite{Caddeo1} that if $F(r)$ is a radial proper harmonic function on a $3$-dimensional space form, then $r\,F(r)$ is proper biharmonic $(r>0)$. In Theorem\1\ref{theorem-2-warped} we will show that a result of this type holds in the setting \eqref{general-warped-product}, where $r$ measures the distance from the focal variety. First, we establish the following:
\begin{lemma}\label{theorem-1-warped}
Let $M$ be as in \eqref{join} or in \eqref{join-hyperbolic}. Then the function $H(r)=r\,(r>0) $ is proper biharmonic if and only if $p=q=3$.
\end{lemma}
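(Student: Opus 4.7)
The plan is to apply the radial Laplacian formula \eqref{Laplacian-radiale-warped} twice. Setting $F(r)=r$ in \eqref{Laplacian-radiale-warped} gives
\[
h(r) := \Delta r = (p-1)\,\frac{f_1'(r)}{f_1(r)} + (q-1)\,\frac{f_2'(r)}{f_2(r)},
\]
which is exactly the coefficient of $F'(r)$ that appears in \eqref{Laplacian-radiale-warped}. Applying that same formula a second time to the radial function $h(r)$, one finds
\[
\Delta^{2} r = h''(r) + h(r)\,h'(r) = \frac{d}{dr}\!\left(h'(r) + \tfrac{1}{2}\,h(r)^{2}\right),
\]
so that $r$ is biharmonic on the regular region $r>0$ if and only if the quantity $h'(r)+h(r)^{2}/2$ is constant in $r$.

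Next, I would substitute the concrete pair $(f_{1},f_{2})$ corresponding to each metric. In the spherical case \eqref{join} one has $h(r) = (p-1)\cot r - (q-1)\tan r$, and with the identities $\cot^{2} r = \csc^{2} r - 1$, $\tan^{2} r = \sec^{2} r - 1$, $\cot r\cdot\tan r = 1$, a short expansion yields
\[
h'(r) + \tfrac{1}{2}\,h(r)^{2} = \tfrac{1}{2}(p-1)(p-3)\csc^{2} r + \tfrac{1}{2}(q-1)(q-3)\sec^{2} r + \mathrm{const}.
\]
The key cancellation is that the cross term $-2(p-1)(q-1)\cot r\cdot\tan r$ arising from $h(r)^{2}$ is already a constant, leaving only the two linearly independent functions $\csc^{2} r$ and $\sec^{2} r$ on $(0,\pi/2)$. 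Constancy therefore forces $(p-1)(p-3)=0$ and $(q-1)(q-3)=0$. The hyperbolic case \eqref{join-hyperbolic} proceeds in exactly the same way: now $h(r) = (p-1)\coth r + (q-1)\tanh r$, and the analogous identities $\coth^{2} r = 1 + \operatorname{csch}^{2} r$, $\tanh^{2} r = 1 - \operatorname{sech}^{2} r$, $\coth r\cdot\tanh r = 1$ give the same algebraic conditions $(p-1)(p-3) = (q-1)(q-3) = 0$.

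Under the natural assumption $p,q\geq 2$ (so that the spheres $\s^{p-1}$ and $\s^{q-1}$ in \eqref{join} and \eqref{join-hyperbolic} are positive-dimensional), the only solution is $p=q=3$. In that case $h(r)$ specialises to $2\cot r - 2\tan r$ (respectively $2\coth r + 2\tanh r$), which does not vanish identically, so $\Delta r \not\equiv 0$ and $r$ is indeed proper biharmonic. I do not foresee a genuine obstacle in this argument; the one clean observation that drives everything is that the iterated Laplacian of $r$ collapses to $(h' + h^{2}/2)'$, which guarantees that the quadratic cross term from $h^{2}$ contributes only a constant and therefore cannot spoil the linear-independence argument on $\csc^{2} r$ and $\sec^{2} r$ (or their hyperbolic analogues).
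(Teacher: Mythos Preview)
Your proof is correct and follows essentially the same direct-computation approach as the paper: both apply the radial Laplacian formula \eqref{Laplacian-radiale-warped} twice to obtain the explicit expression for $\Delta^{2}r$ and then read off the conditions $(p-1)(p-3)=(q-1)(q-3)=0$. Your observation that $\Delta^{2}r=(h'+\tfrac12 h^{2})'$ is a tidy way to package the second application, and you make explicit the standing assumption $p,q\ge 2$ that the paper leaves implicit, but otherwise the arguments coincide.
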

\begin{proof} We provide the details of the proof in the case that $M$ is as in \eqref{join}. By using \eqref{Laplacian-radiale-warped} we find:
\begin{equation}\label{Delta-r-eDelta^2-r}
\begin{cases}
\Delta r= (p-1) \cot r - (q-1) \tan r \\
\Delta^2 r= -(p-3)(p-1) \cot r \csc^2 r + (q-3) (q-1) \tan r \sec^2 r \,\,.
\end{cases}
\end{equation}
Now the thesis follows by direct inspection of \eqref{Delta-r-eDelta^2-r}.
In the case that $M$ is as in \eqref{join-hyperbolic}, one uses \eqref{Laplacian-radiale-warped} and the calculations to end the proof are entirely similar.
\end{proof}
Now, we are in the position to prove the following radial version of the Almansi property:
\begin{theorem}\label{theorem-2-warped}
Let $M$ be as in \eqref{join} or in \eqref{join-hyperbolic} and assume $p=q=3$.
\begin{itemize}
\item[{\rm (i)}] Let $r>0$. If $F(r)$ is a radial proper harmonic function on $M$, then the function $G(r)=r\,F(r)$ is proper biharmonic.
\item[{\rm (ii)}] There exists a nonradial proper harmonic function $F^*(r,\vartheta_{p-1},\varphi_{q-1}) \,$ on $M \setminus  \s^{q-1}$ such that the function $G=r\,F^*$ is not biharmonic on $M \setminus \s^{q-1}$.
\end{itemize}
\end{theorem}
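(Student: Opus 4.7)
My plan is to apply the product formula \eqref{Laplaciano-prodotto} iteratively and exploit Lemma \ref{theorem-1-warped}, which forces $\Delta^2 r = 0$ precisely when $p = q = 3$. Throughout, I set $a(r) := (p-1) f_1'(r)/f_1(r) + (q-1) f_2'(r)/f_2(r)$, so that by \eqref{Laplacian-radiale-warped} one has $\Delta r = a$ and $\Delta^2 r = a'' + a a'$.

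For part (i), the radial product formula \eqref{Laplaciano-prodotto-radiale} together with the harmonicity of $F$ gives $\Delta(rF) = F\,\Delta r + 2 F' = K(r)$, where $K := aF + 2F'$. Applying \eqref{Laplacian-radiale-warped} to $K$ and using $F'' = -aF'$ (from the harmonicity of $F$) to eliminate the higher derivatives of $F$, the bookkeeping collapses to
\[
\Delta^2(rF) \;=\; (a'' + a a')\, F \;=\; (\Delta^2 r)\, F,
\]
so Lemma \ref{theorem-1-warped} yields $\Delta^2(rF) = 0$. For the ``proper'' claim I would integrate $(f_1^{p-1} f_2^{q-1} F')' = 0$ to obtain the explicit two-parameter family of radial harmonic functions (spanned essentially by $1$ and $\cot(2r)$ in case \eqref{join}, with the hyperbolic analogue in case \eqref{join-hyperbolic}) and then check by direct substitution that $K \equiv 0$ forces both parameters to vanish.

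For part (ii), I would take $F^*$ of the separated form \eqref{funzione-non-radiale} with $V \equiv 1$ (so $\lambda = 0$) and $W$ a nonconstant first spherical harmonic on $\s^{q-1}$, so $\mu = -(q-1) = -2 \neq 0$; a nontrivial radial factor $F(r)$ making $F^*$ harmonic exists as any nonzero solution on $r > 0$ of the ODE $F'' + aF' + (\mu/f_2^2) F = 0$ coming from \eqref{Laplacian-non-radiale-warped}. The general product formula \eqref{Laplaciano-prodotto} then gives $\Delta(r F^*) = K(r)\, W$, and \eqref{Laplacian-non-radiale-warped} applied to $K(r)\, W$ leads, via the same cancellation as in (i) but with the modified elimination $F'' = -aF' - (\mu/f_2^2)F$, to the key identity
\[
\Delta^2(rF^*) \;=\; \left[\, \Delta^2 r \,-\, 2 \left( \frac{\mu}{f_2^2} \right)' \,\right] F\, W.
\]
For $p = q = 3$ this reduces to $-2(\mu/f_2^2)'\, F\, W$, which is not identically zero since $f_2 \in \{\cos r,\, \cosh r\}$ is not constant and $F,\,W$ are nonzero analytic factors. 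Hence $rF^*$ fails to be biharmonic on $M \setminus \s^{q-1}$.

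The main obstacle is the algebraic cancellation behind the key identity $K'' + a K' + B K = (\Delta^2 r - 2 B')\, F$ (with $B := \lambda/f_1^2 + \mu/f_2^2$): getting it right is what allows Lemma \ref{theorem-1-warped} to do the work in (i) and simultaneously pinpoints the obstruction $-2 B'$ responsible for (ii). A minor but necessary side-check, not automatic, is that $K$ does not vanish identically for any nonzero radial harmonic $F$ in (i); I would settle it by the explicit form above.
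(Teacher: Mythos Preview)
Your proof is correct. Part~(i) follows the paper's own argument almost line for line: both compute $\Delta(rF)=aF+2F'$ from the product formula, then apply the radial Laplacian once more and use $F''=-aF'$ to reduce everything to $(\Delta^2 r)\,F$, which vanishes by Lemma~\ref{theorem-1-warped}; your treatment is simply written in terms of the abstract coefficient $a$ rather than the explicit $4\cot(2r)$. The ``proper'' check is also handled the same way in spirit.

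Part~(ii), however, takes a genuinely different route from the paper. The paper chooses $\lambda=\mu=-2$, writes down the explicit harmonic radial factor $\overline{F}(r)=(4r-\sin 4r)/\sin^2(2r)$, derives the fourth-order ODE equivalent to biharmonicity of $r\overline{F}\,VW$, and verifies by direct substitution that it fails. You instead derive the clean identity
\[
\Delta^2(rF^*) \;=\; \bigl(\Delta^2 r - 2B'\bigr)\,F\,V\,W,\qquad B=\tfrac{\lambda}{f_1^2}+\tfrac{\mu}{f_2^2},
\]
valid for any separated harmonic $F^*$, and then read off the obstruction: when $p=q=3$ the first term vanishes by Lemma~\ref{theorem-1-warped}, while $B'\not\equiv 0$ as soon as $(\lambda,\mu)\neq(0,0)$ and the corresponding $f_i$ is nonconstant. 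Your approach is more conceptual --- it explains \emph{why} the radial Almansi step works and exactly which extra term kills it in the nonradial case --- and it spares you from having to produce an explicit $\overline{F}$ or a fourth-order ODE. The paper's approach, by contrast, gives a concrete explicit counterexample, which some readers may prefer. Both are valid; yours is shorter and more structural.
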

\begin{proof} We provide the proof in the case that $M$ is as in \eqref{join}.

(i) We use the assumption that $F$ is proper harmonic and compute:
\begin{equation}\label{DeltaG-1}
\Delta G(r)= \Delta r \, F(r)+r\, \Delta F +2 \, \nabla r \cdot \nabla F = \Delta r \,F(r)+2 F'(r) \,\,.
\end{equation}
Since $p=q=3$, $\Delta r =4\,\cot (2r)$ and an explicit integration shows that the only function $F(r)$ which satisfies both the equations $\Delta G = 0$ and $\Delta F=0$ is $F(r) \equiv 0$. So, we conclude that under our hypotheses $G$ is not harmonic. Next, we compute the bi-Laplacian of $G$:
\begin{eqnarray}\label{DeltaG-2}
\Delta^2 G(r)&=&\Delta \left(\Delta r \,F(r) \right)+ 2\,\Delta F'(r) \\ \nonumber
&=& 2  \left [\nabla \left( \Delta r \right )\right ] \cdot \nabla F+ 2\,\Delta F'(r) \\ \nonumber
&=&2 \,\left ( 4\,\cot (2r) \right )' F'(r)+ 2  \left [F'''(r)+4\,\cot (2r)\,F''(r) \right ]\,\,, \\ \nonumber
\end{eqnarray}
where we have used that $F$ is harmonic and the fact that $\Delta^2 r=0$ by Lemma\1\ref{theorem-1-warped}. Now, the harmonicity of $F$ implies:
\begin{equation}\label{DeltaG-3}
 F'''(r)= -\,\left (4\,\cot (2r)\,F'(r)\right )'\,\,.
\end{equation}
Finally, replacing \eqref{DeltaG-3} into the last equation of \eqref{DeltaG-2}, it is easy to conclude that $\Delta^2 G(r)=0$.

(ii) In order to construct our counterexample, we take
\begin{equation}\label{counterexample}
F^*(r,\vartheta_{2},\varphi_{2})=\overline{F}(r)\,V\left(\vartheta_{2}\right)\,W\left(\varphi_{2}\right ) \,\,,
\end{equation}
where $\overline{F}(r)$ is to be determined and $V\left(\vartheta_{2}\right),\,W\left(\varphi_{2}\right ) $ are eigenfunctions on $\s^2$ both associated to eigenvalues $\lambda=\mu=-\,2$ (this choice is admissible, because the spectrum of the Laplacian in this dimension is of the form $ -k(k+1)$, $k \geq1$ (see \cite{BGM})). By using \eqref{Laplacian-non-radiale-warped} we find that a function $F^*$ as in \eqref{counterexample} is harmonic if and only if $\overline{F}(r)\,(r>0)$ is a solution of
\begin{equation}\label{harmonicity-counterexample}
F''(r)+4\,\cot (2r)\,F'(r) - \,\frac{8}{\sin^2 (2r)}\, F(r)=0 \,\, .
\end{equation}
Direct integration shows that
\begin{equation}\label{def-F-counterexample}
\overline{F}(r)= \frac{4r- \sin (4r)}{\sin^2 (2r)}
\end{equation}
is a solution of \eqref{harmonicity-counterexample}. In other words, if $\overline{F}(r)$ is given by \eqref{def-F-counterexample}, a function as in \eqref{counterexample} is proper harmonic on $M\setminus \s^{q-1}$. By way of summary, it only remains to show that
\begin{equation}\label{counterexample-bis}
G(r,\vartheta_{2},\varphi_{2})=r\,F^*(r,\vartheta_{2},\varphi_{2})
\end{equation}
is not biharmonic. Again, by using twice \eqref{Laplacian-non-radiale-warped}, we find that $G(r,\vartheta_{2},\varphi_{2})$ is biharmonic if and only if $F(r)=r\,\overline{F(r)}$ is a solution of
\begin{equation}\label{biharmonicity-counterexample}
F^{(4)}(r)+8\,F^{(3)}(r)\cot (2r)+F''(r)\, \frac{8}{\sin^2 (2r)}\,\left(\cos (4r) -3\right )=0 \,\, .
\end{equation}
But direct substitution of $r\,\overline{F(r)}$ into the left-hand side of equation  \eqref{biharmonicity-counterexample} gives
$$
64 \cot(2r)\, \csc^4 (2r) \left ( \sin (4r)-4r \right) \not \equiv 0 \,\, ,
$$
so ending the proof.
In the case that $M$ is as in \eqref{join-hyperbolic}, one uses \eqref{Laplacian-radiale-warped} and \eqref{Laplacian-non-radiale-warped}: again, the calculations are entirely similar and so we omit the details.
\end{proof}
\begin{remark}
We conjecture that, under the assumptions of Theorem\1\ref{theorem-2-warped} (i), the function $G(r)=r^k\,F(r)$ is proper $(k+1)$-harmonic for all $k \geq1$.
\end{remark}
\section{The Almansi property on the Euclidean space and its generalization to the semi-Euclidean case}\label{proof-euclidean}
In the first part of the paper we showed that, in a Riemannian setting, the Almansi property is somehow peculiar of the Euclidean space. Here we provide an extension of this property to a semi-Euclidean context. Our proof complements the approach of \cite{ACL} and will include the classical Almansi property\1\ref{Almansi} as a special case.

Let $\R^{p,q}$ ($p\geq1,\,q \geq 0$) be equipped with its canonical $(p,q)$-signature metric structure
\begin{equation}\label{p-q-metric}
 g=\left[
 \begin{array}{cc}
  I_p &0\\
  0& -\,I_q
 \end{array}
 \right ] \,\,,
 \end{equation}
where $I_n$ is the identity matrix of order $n$. Then the Laplacian takes the form
 \begin{equation}\label{Laplacian-cartesian-p-q}
 \Delta F=\sum_{i=1}^p \, \frac{\partial^2 F}{\partial x_i^2}\,-\,\sum_{j=1}^q \, \frac{\partial^2 F}{\partial y_j^2}\,\,,
 \end{equation}
where $(x,y)=(x_1,\ldots,x_p,y_1,\ldots,y_q)$ are Cartesian coordinates. The case $q=0$ is admitted, so that the work of this section will include the Euclidean case as a special instance (note that \eqref{Laplacian} and Definitions\1\ref{iterated-laplacian}--\ref{definition-proper-r-harmonic} are extended to the semi-Euclidean case).

It is worth to point out that the general properties of solutions of the equation $\Delta F =0$ in a semi-Euclidean context are very much different from those of the classical harmonic functions on $\R^m$. By way of example, $F(x,y)=-\,(x^2+y^2)$ is a solution of $\Delta F =0$ on $\R^{1,1}$ but it does not satisfy the maximum principle. Despite these considerations, we can prove the following extension of the Almansi property\1\ref{Almansi} to the semi-Euclidean case:
\begin{property-general-semi}\label{almansi-semi} Let $\R^{p,q}$ be equipped with its canonical metric \eqref{p-q-metric} ($p\geq1,\,q \geq 0$). Let $H:\R^{p,q} \to \R$ be defined by
\begin{equation}\label{definizione-H-semi}
H(x_1,\ldots,x_p,y_1,\ldots,y_q)=c_1\,\left(\,\sum_{i=1}^p \, x_i^2 -\sum_{j=1}^q \, y_j^2\,\right)+c_2\,\, ,
\end{equation}
where $c_1,\,c_2$ are two real constants with $c_1 \neq 0$. If $F:U\subset \R^{p,q} \to \R$ is an $s$-harmonic function ($s \geq1$) on an open set $U$, then the product function $G=H \, F$ is $(s+1)$-harmonic on $U$. Moreover, if $F$ is proper $s$-harmonic on the whole $\R^{p,q}$, then $G$ is proper $(s+1)$-harmonic on $\R^{p,q}$.
\end{property-general-semi}
Because of the linearity, from now on we shall assume without further mention that $c_1=1$ and $c_2=0$ in \eqref{definizione-H-semi}.
Now, let us first carry out some preliminary work: with reference to the set of coordinates $w=(x,y)$ in \eqref{definizione-H-semi}, we introduce the following differential operators\begin{equation}\label{operatori-differenziali}
{\rm (i)} \,\, \left \{ \begin{array}{l}
\nabla^x =\left [\frac{\partial}{\partial x_1},\ldots, \frac{\partial}{\partial x_p}\right] \\
\\
\nabla^y =\left [\frac{\partial}{\partial y_1},\ldots, \frac{\partial}{\partial y_q}\right] \\
\end{array}
\right. \qquad {\rm (ii)} \,\, \left \{ \begin{array}{l}
\Delta^x =\sum_{i=1}^p\,\,\frac{\partial^2}{\partial x_i^2} \\
\\
\Delta^y =\sum_{j=1}^q\,\,\frac{\partial^2}{\partial y_j^2}\,\, .\\
\end{array}
\right.
\end{equation}
In particular, if $\nabla$ and $\Delta$ denote gradient and Laplacian respectively with respect to the metric structure \eqref{p-q-metric} on $\R^{p,q}$, we have:
\begin{equation}\label{legami-operatori-differenziali}
{\rm (i)} \,\, \nabla =[\nabla^x,\,-\nabla^y]\qquad {\rm (ii)} \,\,\Delta= \Delta^x - \Delta^y
\end{equation}
and, in particular,
\begin{equation}\label{gradiente-laplaciano-di-H}
{\rm (i)} \,\, \nabla H=2\, w\,\,  \qquad {\rm (ii)} \,\,\Delta H= 2  \, (p+q)\,\,.
\end{equation}
We now prove a series of lemmata of interest for our purposes.
\begin{lemma}\label{lemma1} Let $F$ be any smooth, real valued function defined on an open set $U$ of $\R^{p,q}$. Then
\begin{equation}\label{eq-1-almansi-semi}
\Delta \left( w \cdot \nabla F \right)= 2\, \Delta F + w \cdot \left (\nabla \left( \Delta F\right) \right ) \,\, .
\end{equation}
\end{lemma}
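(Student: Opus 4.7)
The plan is to prove the identity by a direct computation in the Cartesian coordinates $w=(x_1,\ldots,x_p,y_1,\ldots,y_q)$. First I would pin down the meaning of $w \cdot \nabla F$: by (4.5)(i) and (4.6)(i), we have $\nabla H = 2w$, and expanding the semi-Euclidean contraction gives
$$
w \cdot \nabla F \;=\; \sum_{i=1}^{p} x_i\,\frac{\partial F}{\partial x_i} \;+\; \sum_{j=1}^{q} y_j\,\frac{\partial F}{\partial y_j},
$$
i.e.\ the directional derivative of $F$ along the radial (position) vector field. Note that the $(p,q)$-signature in $\nabla$ cancels against the signature in the contraction, so the $y$-terms appear with a plus sign; this symmetry between the $x$- and $y$-blocks is what will make the calculation go through cleanly.

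The core step is the commutator identity
$$
\partial_z (w \cdot \nabla F) \;=\; \partial_z F \;+\; w \cdot \nabla(\partial_z F), \qquad z \in \{x_1,\ldots,x_p,\,y_1,\ldots,y_q\},
$$
which I would verify by the product rule: differentiating a term $x_i\,\partial_{x_i} F$ (resp.\ $y_j\,\partial_{y_j} F$) with respect to $z$ produces a Kronecker-$\delta$ contribution that, once summed over the index, yields exactly one copy of $\partial_z F$, plus a term $x_i\,\partial_z\partial_{x_i} F$ (resp.\ $y_j\,\partial_z\partial_{y_j} F$), and these latter terms reassemble to $w \cdot \nabla(\partial_z F)$.

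Iterating the identity once more (applying it to $\partial_z F$ in place of $F$) gives
$$
\partial_z^{\,2}(w \cdot \nabla F) \;=\; 2\,\partial_z^{\,2} F \;+\; w \cdot \nabla(\partial_z^{\,2} F),
$$
valid uniformly for $z=x_k$ and $z=y_l$. I would then form the semi-Euclidean Laplacian by summing with the signature:
$$
\Delta(w \cdot \nabla F) \;=\; \sum_{k=1}^{p}\partial_{x_k}^{\,2}(w \cdot \nabla F) \;-\; \sum_{l=1}^{q}\partial_{y_l}^{\,2}(w \cdot \nabla F).
$$
The $2\,\partial_z^{\,2} F$ pieces collapse to $2\,\Delta F$, while the remaining pieces reassemble, by linearity of $w\cdot\nabla$, to $w \cdot \nabla(\Delta F)$, yielding \eqref{eq-1-almansi-semi}.

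There is no real obstacle here; the only thing to watch is that both $x$- and $y$-derivatives produce the same shape of commutator (no sign flip at the level of the commutator itself), so that applying the signature only at the very last step cleanly separates the $2\,\Delta F$ contribution from the $w \cdot \nabla(\Delta F)$ contribution.
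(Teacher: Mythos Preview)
Your proposal is correct and follows essentially the same approach as the paper: both start from the explicit coordinate expression $w\cdot\nabla F=\sum_i x_i\,\partial_{x_i}F+\sum_j y_j\,\partial_{y_j}F$ and carry out a direct computation of $\Delta$ applied to it. Your organization via the first-order commutator identity $\partial_z(w\cdot\nabla F)=\partial_z F + w\cdot\nabla(\partial_z F)$ is a clean way to package the same bookkeeping the paper does by expanding $\Delta^x$ and $\Delta^y$ block by block.
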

\begin{proof}We denote by $\cdot$ the scalar product on $\R^{p,q}$ and by $<,>$ the Euclidean scalar product of $\R^p$ and $\R^q$. First, we compute:
\begin{equation}\label{calcolo-utile}
w \cdot \nabla F= <x,\nabla^x F>-<y,-\,\nabla^y F>=\sum_{i=1}^p\,x_i\, \frac{\partial F}{\partial x_i}\,+\,\sum_{j=1}^q\,y_j\, \frac{\partial F}{\partial y_j}\,\,.
\end{equation}
From this we deduce:
\begin{eqnarray}
\Delta \left (w \cdot \nabla F \right )&=& \sum_{k=1}^p\, \frac{\partial^2}{\partial x_k^2}\left ( \sum_{i=1}^p\,x_i\, \frac{\partial F}{\partial x_i}\,\right )+ \sum_{k=1}^p\, \frac{\partial^2}{\partial x_k^2}\left ( \sum_{j=1}^q\,y_j\, \frac{\partial F}{\partial y_j} \right ) \\ \nonumber
&&-\, \sum_{\ell=1}^q\, \frac{\partial^2}{\partial y_\ell^2}\left ( \sum_{i=1}^p\,x_i\, \frac{\partial F}{\partial x_i}\,\right )- \sum_{\ell=1}^q\, \frac{\partial^2}{\partial y_\ell^2}\left ( \sum_{j=1}^q\,y_j\, \frac{\partial F}{\partial y_j} \right ) \\ \nonumber
&=&2\Delta^x F+<x,\nabla^x \left( \Delta^x F\right)>+ <y,\nabla^y \left( \Delta^x F\right)>\\ \nonumber
&&- 2\Delta^yF-<x,\nabla^x \left( \Delta^y F\right)>-<y,\nabla^y \left( \Delta^y F\right)>\\ \nonumber
&=&2\, \Delta F + w \cdot \left (\nabla \left( \Delta F\right) \right ) \,\, .\\ \nonumber
\end{eqnarray}
\end{proof}
By using Lemma\1\ref{lemma1} we obtain, by induction on $s$, the following:
\begin{lemma}\label{lemma2}Let $s \geq 1$ and let $F$ be any smooth, real valued function defined on an open set $U$ of $\R^{p,q}$. Then
\begin{equation}\label{eq-2-almansi-semi}
\Delta^s \left( w \cdot \nabla F \right)= 2\,s\, \Delta^s F + w \cdot \left (\nabla \left( \Delta^s F\right) \right ) \,\, .
\end{equation}
\end{lemma}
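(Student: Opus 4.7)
The plan is to proceed by induction on $s$, taking Lemma \ref{lemma1} as the base case $s=1$, which is exactly the stated identity with $s$ replaced by $1$. So nothing needs to be verified for the initial step beyond citing the previous lemma.

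For the inductive step, I would assume that \eqref{eq-2-almansi-semi} holds at level $s$ for every smooth function $F$ and then apply the Laplacian $\Delta$ to both sides. On the left this produces $\Delta^{s+1}(w\cdot\nabla F)$. On the right, linearity of $\Delta$ gives
\begin{equation*}
\Delta^{s+1}(w\cdot\nabla F)=2s\,\Delta^{s+1}F+\Delta\bigl(w\cdot\nabla(\Delta^{s}F)\bigr).
\end{equation*}
The key idea is that the second term is of the form already handled by Lemma \ref{lemma1}: I apply that lemma with $F$ replaced by the smooth function $\Delta^{s}F$, obtaining
\begin{equation*}
\Delta\bigl(w\cdot\nabla(\Delta^{s}F)\bigr)=2\,\Delta^{s+1}F+w\cdot\nabla(\Delta^{s+1}F).
\end{equation*}
Substituting this back and collecting the coefficients of $\Delta^{s+1}F$ yields $2s+2=2(s+1)$, which closes the induction.

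There is no real obstacle here: once Lemma \ref{lemma1} is available as a ``local'' identity valid for \emph{any} smooth $F$, the strength of the induction comes precisely from being able to feed $\Delta^{s}F$ into it. The only minor point to mention is that $\Delta^{s}F$ is smooth on $U$ whenever $F$ is, so that the hypothesis of Lemma \ref{lemma1} is legitimately satisfied at each step; this is immediate from the definition of the iterated Laplacian and does not require any extra regularity assumption.
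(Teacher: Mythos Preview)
Your proof is correct and follows exactly the approach the paper indicates: induction on $s$, with Lemma~\ref{lemma1} supplying both the base case and the inductive step applied to $\Delta^{s}F$. The paper itself only states that the result follows ``by induction on $s$'' from Lemma~\ref{lemma1}, so your argument is precisely the intended one, written out in full.
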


\begin{lemma}\label{lemma3}Let $s \geq 1$ and let $F$ be an $s$-harmonic function defined on an open set $U$ of $\R^{p,q}$. Then
\begin{equation}\label{eq-3-almansi-semi}
\Delta^s \left( H\, \Delta F  \right)= 0 \,\, .
\end{equation}
\end{lemma}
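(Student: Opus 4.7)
My plan is to prove Lemma~\ref{lemma3} by induction on $s$, exploiting the fact that $\Delta F$ is $(s-1)$-harmonic when $F$ is $s$-harmonic, so the inductive hypothesis can be invoked on $\Delta F$. The base case $s=1$ is immediate: if $F$ is harmonic then $H\,\Delta F \equiv 0$ and therefore $\Delta(H\,\Delta F)=0$.

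For the inductive step, assume the statement holds for $s$ and suppose $F$ is $(s+1)$-harmonic. I would first compute $\Delta(H\,\Delta F)$ once, using the product formula \eqref{Laplaciano-prodotto} (which remains valid in the semi-Euclidean setting, since the derivation only uses the linearity of $\Delta$ and the Leibniz rule in Cartesian coordinates), together with the identities $\Delta H = 2(p+q)$ and $\nabla H = 2w$ from \eqref{gradiente-laplaciano-di-H}. This yields
\begin{equation*}
\Delta(H\,\Delta F) \;=\; H\,\Delta^2 F \,+\, 2(p+q)\,\Delta F \,+\, 4\, w \cdot \nabla(\Delta F).
\end{equation*}
Applying $\Delta^s$ to both sides and writing $\Delta^{s+1}(H\,\Delta F) = \Delta^{s}\bigl(\Delta(H\,\Delta F)\bigr)$ produces three terms to analyse.

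The three terms are handled as follows. The middle one equals $2(p+q)\,\Delta^{s+1}F = 0$ because $F$ is $(s+1)$-harmonic. The third term, $4\,\Delta^s(w\cdot \nabla(\Delta F))$, is controlled by Lemma~\ref{lemma2} applied to $\Delta F$: it equals $4\bigl(2s\,\Delta^{s+1}F + w\cdot\nabla(\Delta^{s+1}F)\bigr)$, which vanishes for the same reason. The key step is the first term $\Delta^{s}(H\,\Delta^{2}F) = \Delta^{s}\bigl(H\,\Delta(\Delta F)\bigr)$: since $\Delta F$ is $s$-harmonic, the inductive hypothesis applied to the function $\Delta F$ in place of $F$ gives exactly $\Delta^{s}\bigl(H\,\Delta(\Delta F)\bigr)=0$. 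Summing, $\Delta^{s+1}(H\,\Delta F)=0$, completing the induction.

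The only conceptually delicate point is recognising that the induction should be set up so that the inductive hypothesis is invoked on $\Delta F$ rather than on $F$; once this substitution is made, Lemma~\ref{lemma2} disposes of the $w\cdot\nabla$ term cleanly, and no further computation is needed. I do not anticipate any serious technical obstacle beyond bookkeeping the order of $\Delta$'s.
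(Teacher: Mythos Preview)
Your proof is correct and follows essentially the same route as the paper. The paper also expands $\Delta(H\,\Delta F)$ via the product rule and \eqref{gradiente-laplaciano-di-H}, kills the $2(p+q)\Delta F$ and $4\,w\cdot\nabla(\Delta F)$ terms using $\Delta^s F=0$ together with Lemma~\ref{lemma2}, and is left with $\Delta^{s-1}(H\,\Delta^2 F)$; rather than phrasing the remaining step as induction on $s$, the paper simply iterates this reduction down to $\Delta(H\,\Delta^s F)=0$, which is the same argument written out as a descent instead of a formal induction.
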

\begin{proof} By using \eqref{gradiente-laplaciano-di-H}, Lemma\1\ref{lemma2} and $\Delta^s F=0$ we compute:
\begin{eqnarray}\label{eq-lemma3}
\Delta^s \left(H\, \Delta F \right )&=&\Delta^{s-1} \left(\Delta \left (H\, \Delta F \right )\right)\\ \nonumber
&=&\Delta^{s-1} \left(2(p+q) \Delta F +H\, \Delta^2 F +4\,w \cdot \nabla \left(\Delta F \right)\right) \\ \nonumber
&=&2(p+q) \Delta^s F +\Delta^{s-1} \left(H\, \Delta^2 F \right) \\ \nonumber
&&\,+8\,(s-1)\,\Delta^s F + 4\,w \cdot \nabla \left(\Delta^s F \right) \\ \nonumber
&=&\Delta^{s-1} \left(H\, \Delta^2 F \right)= \ldots= \Delta \left (H\,\Delta^s F\right )=0\,\,.\\ \nonumber
\end{eqnarray}
\end{proof}
\begin{lemma}\label{lemma4} Let $c_1,\,c_2$ be two positive constants. If $F^*:\R^n \to \R$ ($n\geq1$) is a smooth solution of
\begin{equation}\label{eq-4-almansi-semi}
c_1 \,F^* + c_2\, x \cdot \nabla F^* =0\,\,,
\end{equation}
then $F \equiv 0$.
\end{lemma}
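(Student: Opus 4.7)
The plan is to reduce the PDE to an ODE along rays through the origin and then invoke smoothness of $F^*$ at $0$. First I would evaluate the hypothesis at $x=0$: the term $x\cdot\nabla F^*$ vanishes and we are left with $c_1 F^*(0)=0$, so $F^*(0)=0$. Next, fix an arbitrary $v\in\R^n\setminus\{0\}$ and set $\phi(t):=F^*(tv)$ for $t>0$. Since $(tv)\cdot\nabla F^*(tv)=t\,\phi'(t)$, evaluating the equation $c_1 F^*+c_2\,x\cdot\nabla F^*=0$ at the point $x=tv$ yields the first order linear ODE
$$c_2\,t\,\phi'(t)+c_1\,\phi(t)=0,\qquad t>0.$$

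This ODE is nonsingular on $(0,+\infty)$ and its one-parameter family of solutions is $\phi(t)=C\,t^{-c_1/c_2}$ for a constant $C=C(v)$. Because $F^*$ is smooth (in particular continuous) at the origin, $\phi(t)\to F^*(0)=0$ as $t\to 0^+$. On the other hand, since $c_1>0$ and $c_2>0$, the exponent $-c_1/c_2$ is strictly negative, so $t^{-c_1/c_2}\to+\infty$ as $t\to 0^+$ unless $C=0$. Hence $C(v)=0$, i.e.\ $\phi\equiv 0$ on $(0,+\infty)$. Setting $t=1$ gives $F^*(v)=0$ for every $v\ne 0$; combined with $F^*(0)=0$ this yields $F^*\equiv 0$ on $\R^n$.

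There is no real obstacle here: the whole content of the lemma is that the assumption $c_1,c_2>0$ (same sign) is precisely what makes the homogeneous blowup along rays incompatible with regularity at the origin. A slightly slicker way of packaging the same argument is to observe that the PDE $c_1 F^*+c_2\,x\cdot\nabla F^*=0$ is Euler's homogeneity identity, so its smooth solutions are exactly the positively homogeneous functions of degree $-c_1/c_2$; since this degree is strictly negative, no nonzero function smooth at $0$ can have this property, whence $F^*\equiv 0$.
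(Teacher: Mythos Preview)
Your proof is correct, but it follows a genuinely different route from the paper's. You reduce the PDE to the Euler-type ODE $c_2\,t\,\phi'(t)+c_1\,\phi(t)=0$ along each ray through the origin, solve it explicitly as $\phi(t)=C\,t^{-c_1/c_2}$, and then use continuity of $F^*$ at $0$ together with the negativity of the exponent $-c_1/c_2$ to force $C=0$. The paper's argument is instead geometric: after noting $F^*(O)=0$, it supposes $\Omega^+=\{F^*>0\}$ is nonempty and observes that on $\Omega^+$ the equation gives $x\cdot\nabla F^*<0$, so the gradient always points towards the origin; this is then shown to be incompatible with the existence of a path in $\Omega^+$ along which $F^*$ decreases to $0$.

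Your characteristic/homogeneity approach is shorter and makes the mechanism completely transparent (the obstruction is precisely the strictly negative homogeneity degree). The paper's level-set argument avoids any explicit integration and is in spirit closer to a maximum-principle reasoning, but for this particular lemma your method is both more elementary and more informative.
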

\begin{proof} A proof could be derived by the arguments of Lemma\11.2 of \cite{ACL} (with $c=0$ there). Since it is not convenient to introduce the notation of \cite{ACL}, we provide here an alternative, simple proof. First, we observe that the validity of the equation \eqref{eq-4-almansi-semi} implies $F^*(O)=0$. It follows that, if $F^*$ is constant, then $F \equiv 0$. Next, we assume that $F^*$ is not constant and derive a contradiction. Let
\begin{equation}\label{Omega+}
\Omega^+= \left \{x \in \R^n \,\, : \,\, F^*(x)>0 \right \}\,\,.
\end{equation}
By working with $-\,F^*$ if necessary, we can assume that $\Omega^+$ is not empty and that $O \in \overline{\Omega^+}$. It follows from \eqref{eq-4-almansi-semi} that $x \cdot \nabla F^*<0$ on $\Omega^+$. That means that, inside $\Omega^+$, the gradient $\nabla F^*$ always  points towards the interior of any ball $B_R(O)$. This contradicts the fact that in $\Omega^+$ we must have a path along which $F^*$ decreases to $0$.
\end{proof}

Now, we are in the right position to complete the proof of the generalized Almansi property\1\ref{almansi-semi}:
\begin{proof}
We assume $\Delta^s F=0$ on an open set $U$, we use \eqref{Laplaciano-prodotto}, \eqref{gradiente-laplaciano-di-H}, Lemmata\1\ref{lemma2}, \ref{lemma3} and compute:
\begin{eqnarray}\label{Proof-Almansi-2}
\Delta^{s+1} (H\,F)&=& \Delta^s \left( \Delta (H\,F) \right) \\ \nonumber
&=&\Delta^s \left(2(p+q)\, F \right )+\Delta^s \left(H\, \Delta F \right )+4 \,\Delta^s \left( w \cdot \nabla F \right)\\ \nonumber
&=& 0\,\,.\\ \nonumber
\end{eqnarray}
Now, it remains to show that, if $F$ if a globally defined proper $s$-harmonic function, then $H\,F$ is \textit{proper} $(s+1)$-harmonic on $\R^{p,q}$. By computing precisely as in the proof of Lemma\1\ref{lemma3}, we find that
$$
\Delta^{s}\left( H\,F \right)= c_1\, \Delta^{s-1}F+c_2\,w \cdot \nabla \left( \Delta^{s-1}F \right)
$$
for some positive constants $c_1,\,c_2$ which depend on $s,p$ and $q$. Next, we observe that, by hypothesis, the function $F^*=\Delta^{s-1}F$ does not vanish identically because $F$ is a proper $s$-harmonic function. We consider $F^*$ as a function which is globally defined on $\R^n,\,n=p+q$. As a consequence of \eqref{calcolo-utile}, the conclusion follows immediately from Lemma\1\ref{lemma4}.
\end{proof}
\begin{remark} As an immediate consequence of the generalized Almansi property\1\ref{almansi-semi}, we observe that if $F:\R^{p,q} \to \R$ is a proper harmonic function, then $H^s\, F$, with $H$ as in \eqref{definizione-H-semi}, is proper $(s+1)$-harmonic for all $s \geq 1$.
\end{remark}
\begin{remark} All the calculations and lemmata involved in the proof of the Almansi property on $\R^{p,q}$ are of a local nature, with the exception of Lemma\1\ref{lemma4} where it is required that $F^*$ be globally defined. In particular, we point out that if $F$ is a proper $s$-harmonic function defined on an open set $U$ of $\R^{p,q}$, then $G=H \,F$ is $(s+1)$-harmonic but, in general, it may not be proper $(s+1)$-harmonic. To illustrate this type of situation, it is enough to consider the following simple example in $\R^2$:
$$
F(x,y)= \frac{x}{x^2+y^2}
$$
is proper harmonic on $\R^2 \setminus\{O\}$, and $G=H \,F$ is clearly biharmonic but not proper biharmonic. Similarly, in $\R^{1,1}\setminus\{x=\pm y\}$,
$$
F(x,y)= \frac{x}{x^2-y^2}
$$
is proper harmonic, and $G=H \,F$ is biharmonic but not proper biharmonic.
\end{remark}

\end{document}